\newtheorem{teo}{Theorem}
\newtheorem{lema}{Lemma}
\newtheorem{coro}{Corollary}
\newtheorem{defi}{Definition}
\begin{document}
\title{The nontrivial zeros of the Zeta Function lie on the Critical Line}
\author{Pedro Geraldo\footnote{In Memorian to: G. F. B. Riemann (1826 - 1866).}\\
{\url{(pegeraldo@luz.edu.ve)}}\\
{\small Departamento de Matem\'aticas}\\
{\small Facultad de Ingenier\'ia}\\
{\small Universidad del Zulia}\\
{\small N\'ucleo COL}\\
{\small Cabimas (4013), Venezuela}
\date{10/24/2008}}
\maketitle
\selectlanguage{english}
\begin{abstract}
In this paper is stablished a characterization of the solutions of the equation:
$\zeta(z)=0$. Then such a characterization is used to give a proof for Riemann's Conjecture.\\

{\bf Classification Subject:} 30B40 \& 11M26\\

{\bf Key words:} Riemann's zeta function. Analytic continuation. Critical line. Riemann's conjecture.
\end{abstract}

\section{The Riemann Zeta Function}

Let $t\in\mathbb{R}^+$ and $\log{t}$ be its real value, then:
\[\forall\,n\in\mathbb{Z}\wedge n\geq
1:\,\Big|\frac{1}{n^z}\Big|=\frac{1}{n^{\mathrm{Re}z}}=\frac{1}{e^{
\mathrm{Re}z\cdot\log{n}}}\]
is a well defined function for every $z\in\mathbb{C}$.

Let $\delta>0$ be an arbitrary real number. For $\mathrm{Re}z\geq 1+\delta$, we have:
\[\Big|\frac{1}{n^z}\Big|=\frac{1}{n^{\mathrm{Re}z}}\le\frac{1}{n^{1+\delta}}\]

The $p$-series $\displaystyle\sum_{n=1}^\infty\frac{1}{n^{1+\delta}}$ is convergent and
Weirstrass
criterion says that the series $\displaystyle\sum_{n=1}^\infty\frac{1}{n^z}$ is also
absolutely
convergent for
$\mathrm{Re}z>1$.The Riemann Zeta Function is defined as follow:
\[\zeta(z)=\sum_{n=1}^\infty\frac{1}{n^z}\,\,\,\hbox{ for }\,\,\,\mathrm{Re}z>1\]
$\zeta$ is analytic in the half-plane $\mathrm{Re}z>1$ and uniformly convergent in every
compact set
contained in that half-plane $\mathrm{Re}z>1$.

\begin{defi}\label{def1} Let $E$ and $F$ be sets. Suppose $P\subset E$ and $g:P\to F$ be
an application. The application $f:E\to F$
is said to be an extension of $g$ over $E$ relative to $F$ if $f|_P=g$.
\end{defi}

In general such an application is not unique see \cite{8}.
However, any Analytic continuations (extension) if they exist are
unique, see \cite{15} and \cite{25}.

\begin{teo}\label{zetar} $\zeta$ can be continued across the boundary $\mathrm{Re}z=1$
of the half- plane $\mathrm{Re}>1$, and proves to be a Meromorphic
function having the single pole $z=1$ with the principal part
$\frac{1}{z-1}$; i.e., $z=1$ is a simple pole with  residue $+1$.
\end{teo}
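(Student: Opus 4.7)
The plan is to subtract off the expected simple pole and show that the remainder continues analytically. Starting from the identity $\int_1^\infty x^{-z}\,dx = 1/(z-1)$, valid for $\mathrm{Re}\,z > 1$, I would decompose
\[
\zeta(z) - \frac{1}{z-1} = \sum_{n=1}^\infty \int_n^{n+1}\bigl(n^{-z} - x^{-z}\bigr)\,dx.
\]

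Next, I would estimate the $n$-th summand. Since $\frac{d}{dx}x^{-z} = -z\,x^{-z-1}$, the mean value theorem gives $|n^{-z} - x^{-z}| \le |z|\,n^{-\mathrm{Re}\,z - 1}$ for $x \in [n,n+1]$, so each integral is bounded by $|z|/n^{\mathrm{Re}\,z + 1}$. On every compact subset of the half-plane $\{\mathrm{Re}\,z > 0\}$ both $|z|$ and $\mathrm{Re}\,z$ are controlled, and the series converges absolutely and uniformly. By Weierstrass's theorem the right-hand side defines a function holomorphic on $\{\mathrm{Re}\,z > 0\}$, which combined with $1/(z-1)$ yields an extension of $\zeta$ to $\{\mathrm{Re}\,z > 0\}\setminus\{1\}$; the pole at $z=1$ is manifestly simple with residue $+1$. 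By the uniqueness of analytic continuation cited after Definition~\ref{def1}, this extension is the only one possible on that region.

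To promote this to meromorphy on all of $\mathbb{C}$, I would iterate the preceding argument using Euler--Maclaurin summation: subtracting further correction terms built from Bernoulli polynomials evaluated at the fractional part $\{x\}$ produces a remainder that is holomorphic in an arbitrarily large left half-plane, while introducing no new poles. Equivalently, once the extension to $\{\mathrm{Re}\,z > 0\}$ is secured, one may invoke the Riemann functional equation $\zeta(z) = 2^z\pi^{z-1}\sin(\pi z/2)\Gamma(1-z)\zeta(1-z)$, derived via Poisson summation and theta inversion, to reflect the continuation across the critical line and cover the remaining half-plane.

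The main obstacle, I expect, is not the first step---which is a fairly routine summation-by-parts argument---but the passage to the entire plane: either organizing the Bernoulli bookkeeping so that no extraneous singularities appear, or setting up the theta-function machinery required to prove the functional equation. A secondary concern is ensuring that the interchange of sum and integral, together with the uniform bounds on $|z|$, are valid on arbitrary compact subsets of the target half-plane rather than only on vertical strips of the form $0 < \delta \le \mathrm{Re}\,z \le M$.
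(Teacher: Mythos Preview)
Your argument is correct. The paper itself does not supply a proof---it simply cites Knopp~\cite{17}---but the decomposition you write down is, after the substitution $x=n+t$ and a one-line rearrangement, exactly the identity~\eqref{eq1} that the paper records immediately after the theorem, and your proposed Euler--Maclaurin iteration matches the inductive scheme sketched there in items~(ii) and~(iii); so your approach coincides with the one the paper is relying on.
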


\begin{proof} See \cite{17}
\end{proof}

From Theorem \ref{zetar}, we get that:
\begin{enumerate}
    \item[$(i)$] The analytic Continuation of $\zeta$ up to the boundary
    $\mathrm{Re}z=0$ is given
    by
    \begin{equation}\label{eq1}
    \zeta(z)=1+\frac{1}{z-1}-z\sum_{n=1}^\infty\int_0^1\frac{tdt}{(n+t)^{z+1}}
    \end{equation}
    \item[$(ii)$] The analytic Continuation of $\zeta$ up to $\mathrm{Re}z=-1$ is
    given by
    \[\zeta(z)=1+\frac{1}{z-1}-\frac{z}{2!}\Big[\zeta(z+1)-1\Big]-\frac{z(z+1)}{2!}
    \sum_{n=1}^\infty
    \int_0^1\frac{t^2dt}{(n+t)^{z+2}}\]
    \item[$(iii)$] The analytic Continuation of $\zeta$ up to $\mathrm{Re}z=-2$ is
    given by
    \begin{equation*}
    \begin{split}
    \zeta(z)=&1+\frac{1}{z-1}-\frac{z}{2!}\Big[\zeta(z+1)-1\Big]-\frac{z(z+1)}{3!}
    \Big[\zeta(z+2)-1
    \Big]\\
    &-\frac{z(z+1)(z+2)}{3!}\sum_{n=1}^\infty\int_0^1\frac{t^3dt}{(n+t)^{z+3}}
    \end{split}
    \end{equation*}
\end{enumerate}
and so forth by induction.

The most important issue  here is that by definition \ref{def1}, it is enough to proof
the Riemann Conjeture for
\[\zeta(z)=1+\frac{1}{z-1}-z\sum_{n=1}^\infty\int_0^1\frac{tdt}{(n+t)^{z+1}}\]
see also \cite{25} prop. 16.10.

Remember that: $\zeta(z)=0$ for $z=-2,-4,-6,\ldots$ which can be
deduced from Riemann's functional equation:
\begin{equation}\label{re}
\zeta(z)=2(2\pi)^{z-1}\Gamma(1-z)\zeta(1-z)\sin(\frac{1}{2}\pi z)|\text{ for }z\not=1
\end{equation}

$$-1<Re z<1$$
\newpage

We also know that $\zeta(0)\not=0$ and $\zeta(1)\not=0$, similar
reasoning gives that $\zeta$ has no other zeros outside the
Critical strip $\overline{B}$ than the trivials: $\{-2, -4,
\ldots\}$.

\begin{defi} The points $z=-2,-4,-6,\ldots$ are called the trivials zeros of $\zeta$.
\end{defi}

Let us define the following sets:
\begin{enumerate}
    \item[$(i)$] $F=\{z\in\mathbb{C}:\mathrm{Re}z=\frac{1}{2}\}$ called the critical
    line
    \item[$(ii)$] $B_1=\{z\in\mathbb{C}:0<\mathrm{Re}z<\frac{1}{2}\}$
    \item[$(iii)$] $B_2=\{z\in\mathbb{C}:\frac{1}{2}<\mathrm{Re}z<1\}$
    \item[$(iv)$] $B=B_1\cup B_2$
    \item[$(v)$] $\overline{B}=\{z\in\mathbb{C}:0\le \mathrm{Re}z\le 1\}$ called
    the critical strip. See more about this in \cite{11}.
\end{enumerate}

The Riemann Hypothesis is equivalent to say that $\zeta$ has no zeros in $B$.

\begin{lema}
If $z_0\in (\mathbb{C}\setminus\{0,1\})$, then:
$$z_0\sum_{n=1}^{\infty}\int_{0}^{1}\frac{tdt}{(n+t)^{z_0+1}}=1 \Rightarrow \zeta(z_0)\neq 0$$
\end{lema}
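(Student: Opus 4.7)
The plan is to use the analytic continuation formula (\ref{eq1}) directly. Since $z_0 \in \mathbb{C} \setminus \{0,1\}$, the point $z_0$ is in the domain where (\ref{eq1}) is valid (away from the pole at $z=1$), so we may evaluate
\[
\zeta(z_0) = 1 + \frac{1}{z_0-1} - z_0\sum_{n=1}^{\infty}\int_0^1 \frac{t\,dt}{(n+t)^{z_0+1}}.
\]

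The key step is simply substituting the hypothesis $z_0\sum_{n=1}^{\infty}\int_0^1 \frac{t\,dt}{(n+t)^{z_0+1}} = 1$ into this identity. That turns the right-hand side into $1 + \frac{1}{z_0-1} - 1 = \frac{1}{z_0-1}$. Because $z_0 \neq 1$, the quantity $\frac{1}{z_0-1}$ is a finite nonzero complex number, hence $\zeta(z_0) \neq 0$, as desired.

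There is essentially no obstacle: the only thing to verify carefully is that the excluded point $z_0 = 1$ is genuinely not in play (so that $\frac{1}{z_0-1}$ is finite and nonzero) and that the excluded point $z_0 = 0$ is consistent with the hypothesis (at $z_0=0$ the left side is $0 \neq 1$, so the hypothesis would be vacuous there anyway). One may also remark that once $\zeta(z_0) = \frac{1}{z_0-1}$, its value is pinned down exactly, which is slightly stronger than the stated nonvanishing conclusion; this observation is what presumably motivates the lemma as a characterization of zeros of $\zeta$ via the contrapositive, namely $\zeta(z_0) = 0 \Rightarrow z_0\sum_{n=1}^{\infty}\int_0^1 \frac{t\,dt}{(n+t)^{z_0+1}} \neq 1$.
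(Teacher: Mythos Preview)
Your proof is correct and follows essentially the same approach as the paper: substitute the hypothesis into the analytic continuation formula~(\ref{eq1}) to obtain $\zeta(z_0)=\frac{1}{z_0-1}$, and conclude nonvanishing since $z_0\neq 1$. The paper's version is slightly terser and phrases the final step as $(z_0-1)\zeta(z_0)=1\Rightarrow\zeta(z_0)\neq 0$, while your additional remarks about the excluded points and the contrapositive anticipate exactly the paper's subsequent corollary.
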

\begin{proof}
$$
\begin{array}{l}
\displaystyle \zeta(z_0)=1+\dfrac{1}{z_0-1}-z_0 \displaystyle{\sum_{n=1}^{\infty}\int_{0}^1\dfrac{tdt}{(n+t)^{z_0+1}}=1+\frac{1}{z_0-1}-1=\dfrac{1}{z_0-1}},\\\\
\displaystyle{\zeta(z_0)=\dfrac{1}{z_0-1}\Rightarrow (z_0-1)\zeta(z_0)=1 \Rightarrow \zeta(z_0)\neq 0}
\end{array}
$$
\end{proof}

\begin{coro}
$\displaystyle \zeta (z_0)=0 \Rightarrow z_0 \sum_{n=1}^{\infty}\int_{0}^{1}\frac{t dt}{(n+t)^{z_0+1}}\neq 1$
\end{coro}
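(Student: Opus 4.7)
The plan is to observe that the corollary is essentially the logical contrapositive of the preceding lemma, with a small preliminary check to justify that the hypothesis $z_0 \in \mathbb{C}\setminus\{0,1\}$ of the lemma is automatically satisfied whenever $\zeta(z_0)=0$.

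First I would invoke the facts already recorded in the excerpt just above Definition~2, namely that $\zeta(0)\neq 0$ and $\zeta(1)\neq 0$ (indeed $z=1$ is a simple pole of the continued $\zeta$, so it is certainly not a zero). Together these two observations imply that any $z_0$ with $\zeta(z_0)=0$ must lie in $\mathbb{C}\setminus\{0,1\}$, which puts us squarely in the hypothesis of the Lemma.

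Second, I would apply the contrapositive of the Lemma. The Lemma asserts for $z_0\in\mathbb{C}\setminus\{0,1\}$ the implication
\[
z_0\sum_{n=1}^\infty\int_0^1\frac{t\,dt}{(n+t)^{z_0+1}}=1\ \Longrightarrow\ \zeta(z_0)\neq 0,
\]
whose contrapositive is
\[
\zeta(z_0)=0\ \Longrightarrow\ z_0\sum_{n=1}^\infty\int_0^1\frac{t\,dt}{(n+t)^{z_0+1}}\neq 1,
\]
which is exactly the statement of the corollary.

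There is really no obstacle here: the corollary is a one-line formal consequence of the lemma combined with the previously cited fact that $0$ and $1$ are not zeros of $\zeta$. The only thing to take care about is not forgetting to justify that $z_0\notin\{0,1\}$, since the lemma's hypothesis explicitly excludes those two points and the corollary does not; but as noted, this follows immediately from what has already been established earlier in the paper.
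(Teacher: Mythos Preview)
Your proposal is correct and matches the paper's intent: the corollary is stated without proof in the paper, being simply the contrapositive of the preceding Lemma. Your additional remark that $\zeta(z_0)=0$ forces $z_0\notin\{0,1\}$ (so that the Lemma's hypothesis is met) is a welcome clarification that the paper leaves implicit.
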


\begin{teo}
If $z_0 \in (\mathbb{C}-\{0,1\})$. Then,
$$\zeta(z_0)=0 \Leftrightarrow (z_0-1)\sum_{n=1}^{\infty}\int_{0}^{1}\frac{tdt}{(n+t)
^{z_0+1}}=1$$
\end{teo}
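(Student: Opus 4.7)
The plan is to use formula (\ref{eq1}) directly and manipulate it algebraically. Writing $S(z) := \sum_{n=1}^{\infty}\int_{0}^{1}\frac{t\,dt}{(n+t)^{z+1}}$, the analytic continuation identity reads $\zeta(z_0) = 1 + \frac{1}{z_0-1} - z_0 S(z_0)$, and this is the one identity from which everything must be derived (the lemma preceding this theorem already exploited the same idea in one direction). So both implications should follow from elementary rearrangement, with the hypotheses $z_0 \neq 0$ and $z_0 \neq 1$ each playing a clearly identifiable role.

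For the direction $(\Rightarrow)$, I would assume $\zeta(z_0)=0$ and multiply the identity through by $z_0 - 1$ (legitimate because $z_0 \neq 1$). This gives
\[
0 = (z_0-1) + 1 - z_0(z_0-1)S(z_0) = z_0\bigl[\,1 - (z_0-1)S(z_0)\,\bigr].
\]
Since $z_0 \neq 0$, the bracketed factor must vanish, which yields $(z_0-1)S(z_0) = 1$.

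For the reverse direction $(\Leftarrow)$, I would start from $(z_0-1)S(z_0)=1$, rewrite it as $S(z_0) = \frac{1}{z_0-1}$ (again using $z_0 \neq 1$), substitute into the continuation formula and simplify:
\[
\zeta(z_0) = 1 + \frac{1}{z_0-1} - \frac{z_0}{z_0-1} = 1 + \frac{1-z_0}{z_0-1} = 1 - 1 = 0.
\]

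There is really no hard step here; the theorem is a one-line algebraic restatement of the lemma's content. The only point requiring any attention is to check that formula (\ref{eq1}) is indeed the correct representation at $z_0$; since the paper has already reduced matters to this representation (and to the critical strip), the identity is valid at every $z_0 \neq 1$ under consideration, and the two exclusions $z_0\neq 0,1$ are exactly what is needed to divide by $z_0$ and by $z_0-1$ respectively in the two directions.
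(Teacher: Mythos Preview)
Your proof is correct and follows essentially the same approach as the paper: both directions are obtained by algebraic rearrangement of the identity $\zeta(z_0)=1+\frac{1}{z_0-1}-z_0\,S(z_0)$, using $z_0\neq 1$ to clear the denominator and $z_0\neq 0$ to cancel the common factor. Your presentation is in fact slightly more streamlined than the paper's (which writes out the fraction manipulations at greater length), but the underlying argument is identical.
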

\begin{proof}
\begin{itemize}
\item[(i)] 
$$
\begin{array}{rcl}
0 = \zeta(z_0) & \Rightarrow & \displaystyle{0=1+\frac{1}{z_0-1}-z_0\sum_{n=1}^{\infty}\int_0^1\frac{tdt}{(n+t)^{z_0+1}}}\\
 & = & \displaystyle{\frac{z_0-1+1}{z_0-1}-z_0\sum_{n=1}^{\infty}\int_{0}^{1}\frac{tdt}{(n+t)^{z_0+1}}}\\
 & = & \displaystyle{\frac{z_0}{z_0-1}-z_0\sum_{n=1}^{\infty}\int_{0}^{1}\frac{tdt}{(n+t)^{z_0+1}} = \frac{z_0-z_0(z_0-1)\displaystyle{\sum_{n=1}^{\infty}\int_{0}^{1}\frac{tdt}{(n+t)^{z_0-1}}}}{z_0-1}}\\
 & \Rightarrow & \displaystyle{0=\frac{z_0-z_0(z_0-1)\displaystyle{\sum_{n=1}^{\infty}\int_{0}^{1}\frac{tdt}{(n+t)^{z_0+1}}}}{z_0-1}}\\
 & \Rightarrow & \displaystyle{0\cdot (z_0-1)=z_0-z_0(z_0-1)\sum_{n=1}^{\infty}\int_{0}^1\frac{tdt}{(n+t)^{z_0+1}}}\\
 & \Rightarrow & \displaystyle{0=z_0\Big[1-(z_0-1)\sum_{n=1}^{\infty}\int_{0}^1\frac{tdt}{(n+t)^{z_0+1}}\Big]}\\
 & \Rightarrow & \displaystyle{1-(z_0-1)\sum_{n=1}^{\infty}\int_{0}^{1}\frac{tdt}{(n+t)^{z_0+1}}=0}\\
 & \Rightarrow & \displaystyle{(z_0-1)\sum_{n=1}^{\infty}\int_{0}^{1}\frac{tdt}{(n+t)^{z_0+t}}=1}.\\
\end{array}
$$

\item[(ii)] $\displaystyle (z_0-1)\sum_{1}^{\infty}\int_{0}^{1}\frac{tdt}{(n+t)^{z_0+1}}=1 \Rightarrow
\zeta(z_0)=0$
\end{itemize}
$$
\begin{array}{l}
\displaystyle{\zeta(z_0)=1+\frac{1}{z_0-1}-z_0\sum_{n=1}^{\infty}\frac{tdt}{(n+t)^{z_0+1}}}=\\\\
\displaystyle{(z_0-1)\sum_{1}^{\infty}\int_{0}^{1}\frac{tdt}{(n+t)^{z_0+1}}+\frac{1}{z_0-1}-z_0\sum_{n=1}^{\infty}\frac{tdt}{(n+t)^{z_0+1}}}=\\\\
\displaystyle{z_0\sum_{n=1}^{\infty}\int_{0}^{1}\frac{tdt}{(n+t)^{z_0+1}}-\sum_{n=1}^{\infty}\int_{0}^{1}\frac{tdt}{(n+t)^{z_0+1}}+\frac{1}{z_0-1}-z_0\sum_{n=1}^{\infty}\frac{tdt}{(n+t)^{z_0+1}}}=\\\\
\displaystyle{-\sum_{n=1}^{\infty}\int_{0}^{1}\frac{tdt}{(n+t)^{z_0+1}}+\frac{1}{z_0-1}=\frac{-(z_0-1)\sum_{n=1}^{\infty}\int_{0}^{1}\frac{tdt}{(n+t)^{z_0}}+1}{z_0-1}=\frac{-1+1}{z_0-1}=\frac{0}{z_0-1}=0}\\\\
\displaystyle{\Rightarrow \zeta(z_0)=0}\\\\
\end{array}
$$
\end{proof}

\begin{lema}
For $B=\{z=x+iy | 0<x<\frac{1}{2} \veebar \frac{1}{2} < x < 1 \wedge y\in \mathbb{R}\}$. Then $$\forall \alpha \neq 0 \wedge \forall x:\, 0<x<\frac{1}{2} \veebar \frac{1}{2} < x < 1$$ we have that:
$$1\neq \alpha (x+iy)[(x+iy)-1]$$
\end{lema}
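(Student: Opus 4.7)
The plan is to argue by contradiction. Suppose that for some nonzero $\alpha\in\mathbb{C}$ and some $z=x+iy\in B$ one has $\alpha\,z(z-1)=1$. First I would expand
\[
z(z-1)=\bigl(x(x-1)-y^{2}\bigr)+i\,y(2x-1),
\]
set $A:=x(x-1)-y^{2}$ and $C:=y(2x-1)$, write $\alpha=u+iv$ with $(u,v)\neq(0,0)$, and split $\alpha z(z-1)=1$ into real and imaginary parts to obtain the real $2\times 2$ linear system $uA-vC=1$, $vA+uC=0$.

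Next I would compute the determinant of the coefficient matrix, which equals $A^{2}+C^{2}=|z(z-1)|^{2}$. Because the condition $x\in(0,\tfrac12)\cup(\tfrac12,1)$ excludes $z=0$ and $z=1$, this determinant is strictly positive, so the system has a unique solution. A natural continuation would exploit $x\neq\tfrac12$: if $y=0$ then $A=x(x-1)<0$ and $C=0$, yielding a real constraint on $\alpha$; if $y\neq 0$ then $C\neq 0$ and the two equations fix $(u,v)$ explicitly in terms of $(x,y)$, at which point one would try to rule out the resulting value by some analytic or geometric argument.

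The main obstacle---and in fact the essential difficulty---is that this strategy cannot succeed as stated: the unique solution of the system is simply $\alpha=1/(z(z-1))$, a perfectly valid nonzero complex number for every $z\in B$. In other words, the universal quantifier $\forall\alpha\neq 0$ admits immediate counterexamples and cannot be discharged by the real/imaginary-part calculation alone. Any viable proof must therefore impose an extra constraint on $\alpha$ that is not visible in the wording; for instance, if one demanded $\alpha\in\mathbb{R}^{+}$ then $z(z-1)$ would have to be a positive real, forcing $y=0$ from $C=0$ (since $x\neq\tfrac12$) and hence $x(x-1)<0$, a contradiction. I would therefore look for such an implicit restriction on $\alpha$---most naturally $\alpha=S(z)/z$ with $S(z)=\sum_{n\geq 1}\int_{0}^{1}t(n+t)^{-z-1}\,dt$, under which the claim reduces to $(z-1)S(z)\neq 1$, i.e.\ the Riemann hypothesis on $B$---and carry out the real/imaginary analysis under that constraint. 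Pinning down the intended restriction, and only then invoking the above linear-system analysis, is where the real work will lie.
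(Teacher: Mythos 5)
Your diagnosis is exactly right, and it is worth being blunt about it: the lemma is false as stated, and your counterexample $\alpha = 1/(z(z-1))$ --- well defined and nonzero for every $z=x+iy$ with $x\in(0,\tfrac12)\cup(\tfrac12,1)$, since such $z$ is neither $0$ nor $1$ --- settles the matter. There is no proof to be found, and the ``implicit restriction on $\alpha$'' you went hunting for is not present in the paper either. For comparison, the paper's own argument assumes $1=\alpha_1[(x_1+iy)^2-(x_1+iy)]$ for some particular $\alpha_1$ and some particular point $x_1+iy$, and then \emph{differentiates both sides with respect to $y$} to obtain $0=\alpha_1[2(x_1+iy)-1]i$, hence $x_1=\tfrac12$, ``absurd.'' That differentiation is the fatal error: an equality that holds at a single value of $y$ is not an identity in $y$ and cannot be differentiated. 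The conclusion would follow only if $\alpha_1 z(z-1)=1$ held for all $y$ with $\alpha_1$ and $x_1$ fixed, which is not what was assumed. Your linear-system computation, by contrast, is sound and shows precisely why the universally quantified claim cannot hold.

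Your closing remark --- that the only way to salvage the lemma would be to restrict $\alpha$ to the specific value arising from the zeta identity, at which point the claim just \emph{is} the Riemann Hypothesis on $B$ --- is also the correct reading of how the lemma is deployed in Theorem 3: there the paper derives $1=\alpha_1 z_0(z_0-1)$ for one specific $\alpha_1$ depending on $z_0$ and cites this lemma to declare it absurd. Since the lemma is false, nothing prevents that specific $\alpha_1$ from equalling $1/(z_0(z_0-1))$, the final contradiction evaporates, and the paper's proof of the main theorem collapses with it. In short: the gap is in the paper, not in your proposal; do not spend further effort trying to complete a proof of this statement.
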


\begin{proof} 
Let's suppose that $\exists \alpha_1 \neq 0 \wedge \exists (x_1+iy)$ such that $0<x_1<\frac{1}{2} \veebar \frac{1}{2}<x_1<1$ and $1=\alpha _1 [(x_1+iy)^2-(x_1+iy)]$ then, derivating with respect to y we find that:
$$0=\alpha_1[2(x_1+iy)i-i]=\alpha_1[2(x_1+iy)-1]i$$ $$\Rightarrow 2(x_1+iy)-1=0$$ $$\Rightarrow (x_1+iy)=\frac{1}{2}$$ $$\Rightarrow x_1+iy=\frac{1}{2}+0i$$ $$\Rightarrow x_1=\frac{1}{2} \,\text{ This is Absurd!}$$

Therefore, $\forall \alpha \neq 0 \wedge \forall x: 0<x<\frac{1}{2} \veebar \frac{1}{2}<x<1$ we have that: $1\neq \alpha(x+iy)[(x+iy)-1]$ in particular if $x_0+iy_0=z_0\in B$, we have $1\neq \alpha z_0(z_0-1)\,\, \forall \alpha \neq 0$
\end{proof}

\begin{teo}[The Riemman's conjeture]
$\forall z \in B : \zeta (z)\neq 0$
\end{teo}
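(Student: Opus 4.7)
Since the theorem asserts a property of every point in the open critical strip $B$, I would run a short contradiction argument combining the two preceding results. Assume for contradiction that some $z_0\in B$ satisfies $\zeta(z_0)=0$. Because $B$ contains neither $0$ nor $1$, the preceding characterization Theorem applies and immediately rewrites the hypothesis $\zeta(z_0)=0$ as
\[
(z_0-1)\sum_{n=1}^\infty\int_0^1\frac{t\,dt}{(n+t)^{z_0+1}}=1.
\]

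Abbreviate the series by $S(z_0)$, so the identity reads $(z_0-1)\,S(z_0)=1$. The plan is now to force this into the shape forbidden by the preceding Lemma. Since $z_0\in B$ guarantees $z_0\neq 0$, I would introduce
\[
\alpha\;:=\;\frac{S(z_0)}{z_0},
\]
which converts the identity into $\alpha\,z_0(z_0-1)=1$. Because the left-hand side equals $1$, automatically $\alpha\neq 0$ (otherwise $0=1$). But the Lemma established just above says precisely that $\alpha\,z_0(z_0-1)\neq 1$ for every $\alpha\neq 0$ and every $z_0\in B$. The contradiction delivers $\zeta(z_0)\neq 0$ for all $z_0\in B$, which is the stated form of Riemann's Hypothesis.

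The whole argument is therefore a one-line algebraic repackaging of the zero-characterization into the relation excluded by the Lemma; there is no further analytic work to perform at this stage. Consequently the genuine obstacle is not in this final deduction but has been pushed entirely into the preceding Lemma, where all the arithmetic content of the Riemann Hypothesis must reside. In practice I would spend essentially all of my verification effort on that Lemma, in particular on the step where the supposed identity $1=\alpha_1\bigl[(x_1+iy)^2-(x_1+iy)\bigr]$ is differentiated with respect to $y$ while $\alpha_1$ is held fixed, since the scope of this ``$\forall\alpha\neq 0$'' quantifier is what determines whether the present theorem truly follows.
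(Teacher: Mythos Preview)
Your deduction from the stated Lemma and characterization Theorem is valid and is considerably more direct than the paper's own route. The paper does \emph{not} simply set $\alpha=S(z_0)/z_0$. Instead it first invokes the functional equation to get $\zeta(1-z_0)=0$, applies the characterization Theorem a second time at $1-z_0$ to obtain $-z_0\sum_n\int_0^1 t\,(n+t)^{-(2-z_0)}\,dt=1$, proves an auxiliary claim that the series at $z_0$ and the series at $1-z_0$ are not negatives of one another, writes their difference as a nonzero constant $\alpha_1$, and only after several further manipulations arrives at $1=\alpha_1 z_0(z_0-1)$. Your one-line substitution reaches the same endpoint with none of this detour; the functional equation and the symmetric zero $1-z_0$ play no role in your version, which shows they are inessential decoration in the paper's argument as well.

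Your closing diagnosis is exactly right and is where the real gap lies. Lemma~2, as stated, is simply false: for any fixed $z_0\in B$ one can take $\alpha=\bigl[z_0(z_0-1)\bigr]^{-1}$ and obtain $\alpha z_0(z_0-1)=1$. The paper's ``proof'' differentiates $1=\alpha_1\bigl[(x_1+iy)^2-(x_1+iy)\bigr]$ in $y$ while holding $\alpha_1$ constant, but the $\alpha$ produced by either your argument or the paper's depends on $z_0$ (hence on $y$), so that differentiation is illegitimate. Thus the final theorem does not follow from anything legitimately established earlier; both your short argument and the paper's long one collapse at the same point, namely the invalid Lemma.
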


\begin{proof}
Let's suppose that: $\exists z_0 \in B : \zeta(z_0)=0$
 
\begin{equation}\label{tres}
\zeta(z_0)=0 \wedge (Theorem 2) \Rightarrow (z_0-1) \sum_{n=1}^{\infty}\int_{0}^{1}\frac{t dt}{(n+t)^{z_0+1}}=1
\end{equation}
  
\begin{equation}\label{cuatro}
(2) \wedge \zeta(z_0)=0 \Rightarrow \zeta(1-z_0)=0
\end{equation}

\begin{equation}\label{cinco} 
\zeta(1-z_0)=0 \wedge (Theorem 2) \Rightarrow -z_0 \sum_{n=1}^{\infty}\int_{0}^{1}\frac{t dt}{(n+t)^{2-z_0}}=1
\end{equation}

\begin{equation}\label{seis} 
(5) \Rightarrow \sum_{n=1}^{\infty}\int_{0}^{1}\frac{tdt}{(n+t)^{2-z_0}}\neq 0
\end{equation}

$$ \text{Claim!}\,\, \sum_{n=1}^{\infty}\int_{0}^{1}\frac{t dt}{(n+t)^{z_0+1}} \neq -\sum_{n=1}^{\infty}\int_{0}^{1}\frac{t dt}{(n+t)^{2-z_0}}$$

Let's suppose: $$\sum_{n=1}^{\infty}\int_{0}^{1}\frac{t dt}{(n+t)^{z_0+1}} = -\sum_{n=1}^{\infty}\int_{0}^{1}\frac{t dt}{(n+t)^{2-z_0}}$$ 

$$\Rightarrow (z_0-1)\sum_{n=1}^{\infty}\int_{0}^{1}\frac{t dt}{(n+t)^{z_0+1}} = -(z_0-1)\sum_{n=1}^{\infty}\int_{0}^{1}\frac{t dt}{(n+t)^{2-z_0}}$$

\begin{equation}\label{siete}
\Rightarrow (z_0-1)\sum_{n=1}^{\infty}\int_{0}^{1}\frac{t dt}{(n+t)^{z_0+1}} = -z_0\sum_{n=1}^{\infty}\int_{0}^{1}\frac{t dt}{(n+t)^{2-z_0}} + \sum_{n=1}^{\infty}\int_{0}^{1}\frac{t dt}{(n+t)^{2-z_0}}
\end{equation}
 
$$(3)\wedge (5)\wedge (7) \Rightarrow 1 = 1+\sum_{n=1}^{\infty}\int_{0}^{1}\frac{t dt}{(n+t)^{2-z_0}}$$

$$\Rightarrow \sum_{n=1}^{\infty}\int_{0}^{1}\frac{t dt}{(n+t)^{2-z_0}}=0 \,\text{ This is Absurd!}\,\,\, \text{ (By (6))}$$

Then:
\begin{equation}\label{ocho}
\sum_{n=1}^{\infty}\int_{0}^{1}\frac{t dt}{(n+t)^{z_0+1}} \neq -\sum_{n=1}^{\infty}\int_{0}^{1}\frac{t dt}{(n+t)^{2-z_0}}
\end{equation}

\begin{equation}\label{nueve}
(8) \Rightarrow \exists ! \alpha_1 \neq 0,\alpha_1 \in \mathbb{C} \text{ such that } \sum_{n=1}^{\infty}\int_{0}^{1}\frac{t dt}{(n+t)^{z_0+1}} = -\sum_{n=1}^{\infty}\int_{0}^{1}\frac{t dt}{(n+t)^{2-z_0}} + \alpha_1
\end{equation}

$$(9) \Rightarrow (z_0-1)\sum_{n=1}^{\infty}\int_{0}^{1}\frac{t dt}{(n+t)^{z_0+1}}=-(z_0-1)\sum_{n=1}^{\infty}\int_{0}^{1}\frac{t dt}{(n+t)^{2-z_0}} + \alpha_1 (z_0-1)$$

\begin{equation}\label{diez}
\Rightarrow (z_0-1)\sum_{n=1}^{\infty}\int_{0}^{1}\frac{t dt}{(n+t)^{z_0+1}}=-z_0\sum_{n=1}^{\infty}\int_{0}^{1}\frac{t dt}{(n+t)^{2-z_0}} + \sum_{n=1}^{\infty}\int_{0}^{1}\frac{t dt}{(n+t)^{2-z_0}} + \alpha_1 (z_0-1)
\end{equation}  

$$(3)\wedge (5) \wedge (10) \Rightarrow 1=1+\sum_{n=1}^{\infty}\int_{0}^{1}\frac{t dt}{(n+t)^{2-z_0}} + \alpha_1(z_0-1)$$
 
$$\Rightarrow \sum_{n=1}^{\infty}\int_{0}^{1}\frac{t dt}{(n+t)^{2-z_0}} + \alpha_1(z_0-1)=0$$

$$\Rightarrow -z_0\Big[\sum_{n=1}^{\infty}\int_{0}^{1}\frac{t dt}{(n+t)^{2-z_0}} + \alpha_1(z_0-1)\Big]=0$$

\begin{equation}\label{once}
\Rightarrow -z_0\sum_{n=1}^{\infty}\int_{0}^{1}\frac{t dt}{(n+t)^{2-z_0}}-\alpha_1 z_o (z_0-1)=0
\end{equation}

$$(5)\wedge (11) \Rightarrow 1-\alpha_1 z_0 (z_0-1)=0 \Rightarrow 1=\alpha_1 z_0 (z_0-1)\, \, \text{This is Absurd!}\,\text{ (By lemma 2).}$$
Then the proposition ``$\exists z_0\in B:\zeta(z_0)=0$'' is false. Therefore:
$$\forall z \in B: \zeta(z)\neq 0$$

\end{proof} 

\section{Conclusion of the Saga.}
It is known that $\zeta(z)=0$ for some $z\in F$. See for example: [11], [25], [35] or [38].\\

Not every $z\in F$ is solution  for $\zeta(z)=0$, for example $\frac{1}{2}=z_0\in F$
and it is not difficult to prove that $\zeta(z_0)\neq 0$. We can say now that:

$$R=\{z\in \overline{B}: \zeta(z)=0\}=\{z\in F: (z-1)\sum_{n=1}^{\infty}\int_{0}^{1}\frac{tdt}{(n+t)^{z+1}}=1\}$$

Now we know that the non-trivial zeros of $\zeta(z)=0$ are on the critical
line. Therefore:

To find non-trivial solutions for $\zeta(z)=0$;$z\in F$, we can try the system:

\begin{equation*}
\left\{ \begin{array}{rcl}
\displaystyle{(z-1)\sum_{n=1}^{\infty}\int_{0}^{1}\frac{tdt}{(n+t)^{z+1}}} & = & 1\\
z & = & \frac{1}{2}+iy
\end{array}\right.
\end{equation*}

for $n$ big enough could be useful to try the system

\begin{equation*}
\left\{ \begin{array}{rcr}
\displaystyle \int_{0}^{1}\frac{tdt}{(n+t)^{z+1}} & \approx & \frac{1}{n(n+1)}\\
z & = & \frac{1}{2}+iy
\end{array}\right.
\end{equation*}

\begin{coro} If every statement of the type ``$RH\Leftrightarrow A=B$'' is true. Then
$A=B$.

Where $A=B$ means a relation betwen $A$ and $B$. See below.
\end{coro}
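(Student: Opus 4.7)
The plan is to treat this corollary as a purely propositional consequence of the preceding theorem, with no analytic content of its own. The preceding theorem has just established that $\zeta$ has no zeros in $B$, which, combined with the earlier remarks that $\zeta(0)\neq 0$, $\zeta(1)\neq 0$, and that outside the closed critical strip $\overline{B}$ the only zeros of $\zeta$ are the trivial ones $\{-2,-4,-6,\ldots\}$, is exactly the Riemann Hypothesis as a now-proven proposition of the paper. So RH is available as a true premise.

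From here the corollary reduces to a single application of modus ponens. My proof would proceed in three short steps. First, I would cite the preceding theorem to assert $\mathrm{RH}$ as an established truth in the paper. Second, I would take an arbitrary biconditional of the form ``$\mathrm{RH}\Leftrightarrow (A=B)$'' falling under the hypothesis of the corollary (i.e.\ assumed true). Third, I would extract its forward direction $\mathrm{RH}\Rightarrow (A=B)$ and conclude $A=B$ by modus ponens on the premise $\mathrm{RH}$. Universal quantification over all such biconditionals then yields $A=B$ for every relation $A=B$ encoded in such a statement, which is the desired conclusion.

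The main difficulty here is interpretational rather than deductive: the clarifying sentence ``$A=B$ means a relation between $A$ and $B$'' is informal, so before invoking modus ponens one must fix the convention that each occurrence of ``$A=B$'' stands for a fixed well-formed proposition which can legitimately be placed on one side of a biconditional with $\mathrm{RH}$. Typical instances one would have in mind are the classical equivalent formulations of RH (Littlewood's inequality for $M(x)$, Robin's inequality for $\sigma(n)$, the bound $|\pi(x)-\mathrm{li}(x)|\le C\sqrt{x}\log x$, and so on); each of these is a statement of the form ``quantity $A$ equals/bounds quantity $B$''. Once this convention is agreed, there is no remaining analytic or number-theoretic content to verify, and the proof collapses to one line of propositional logic per hypothesized equivalence.
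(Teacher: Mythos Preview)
Your proposal is correct and matches the paper's own proof essentially line for line: the paper simply asserts that RH holds by the preceding theorem (Theorem~3), then displays the modus ponens schema ``$\mathrm{RH}\Rightarrow(A=B)$; $\mathrm{RH}$; therefore $A=B$'' with a citation to a logic text. Your added interpretational remarks about what ``$A=B$'' is meant to denote go slightly beyond what the paper says, but the deductive content is identical.
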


\begin{proof} $\text{ }$\\

That ``RH'' is true follows from Theorem 3. Then:\\

\noindent $\text{``}RH\text{''} \Rightarrow \text{``}A=B \text{''}$\\
$\underline{\text{``}RH\text{''} \text{ }\text{ }\text{ }\text{ }\text{ }\text{ }\text{ }\text{ }\text{ }\text{ }\text{ }\text{ }\text{ }\text{ }\text{ }\text{ }}$

\hspace{0.2cm} $\text{``}A=B \text{''}$\\

See \cite{4} pags. 13--16.
\end{proof}

\section{Applications.}

\begin{description}
    \item[$1)$] Redheffer (1977)
    \[R.H.\Leftrightarrow\forall\,\varepsilon>0, \exists\,\,C(\varepsilon)>0 :|\det
    {(A(n))}|<C(\varepsilon)n^{
    \frac{1}{2}+\varepsilon}\]
    where $A(n)$ is the $n\times n$ matrix of $0$'s and $1$'s defined by
    \[A(i,j)=\begin{cases}
    1&\hbox{ if } j=1\hbox{ or if } i|j\\
    0&\hbox{ otherwise }
    \end{cases}\]
    This is an important result for linear analysis for example. See \cite{10.2}
    \item[$2)$]Lagarias (2002)

    Let $\sigma(n)$ denotes the sum of the positive divisors of $n$. Then
    \[R.H.\Leftrightarrow\forall\,n:\,\sigma(n)\le H_n+\exp{(H_n)}\log{H_n}\]
    where $H_n=1+\frac{1}{2}+\frac{1}{3}+\cdots+\frac{1}{n}$. This is an important
    result for
    number theory for example. See \cite{10.2}.
    \item[$3)$] Nyman-Beurling
    \[R.H.\Leftrightarrow Span_{L^2(0,1)}=\{\mathcal{N}_\alpha:0<\alpha<1\}=L^2(0,1)
    \]
    where
    \[\mathcal{N}_\alpha(t)=\left\{\frac{\alpha}{t}\right\}-\alpha\left\{\frac{1}{t}\right\}\]
    and $\{x\}=x-[x]$ is the fractional part of $x$ this is an important result for
    Real and Functional
    Analysis for example. See \cite{10.2}
\end{description}

Others results like these can be seen in \cite{10.2}. I believe that one of the must important result
to be studied after this one is the paper of Andre Weil. See \cite{10.2}.

\begin{coro}
If $\chi=\chi_{_{1}}$ is the principal character $mod\, k$ then: $$\forall s\in B: \, L(s,\chi_{_{1}})\neq 0$$
\end{coro}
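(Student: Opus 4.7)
The plan is to reduce the nonvanishing of $L(s,\chi_1)$ on $B$ to the nonvanishing of $\zeta(s)$ on $B$ (which is Theorem 3) via the standard factorization of the $L$-function associated to a principal Dirichlet character. Recall that for the principal character $\chi_1$ modulo $k$, one has the identity
\[
L(s,\chi_1) \;=\; \zeta(s)\prod_{p\mid k}\bigl(1-p^{-s}\bigr),
\]
valid initially for $\operatorname{Re} s>1$ by manipulating the Euler product, and then everywhere by analytic continuation since both sides are meromorphic on $\mathbb{C}$. I would begin the proof by recording this factorization and justifying it (either by a short direct Euler-product argument for $\operatorname{Re} s>1$ followed by the identity theorem, or by simply citing it from a standard analytic number theory reference).

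Next, for any fixed $s\in B$, the claim is that each factor on the right-hand side is nonzero. The $\zeta(s)$ factor is nonzero by Theorem 3, since $B=B_1\cup B_2\subset \overline{B}\setminus F$ and the theorem states $\zeta(z)\neq0$ for all $z\in B$. For the finitely many factors $1-p^{-s}$ with $p\mid k$, I would observe that $1-p^{-s}=0$ forces $p^{-s}=1$, i.e.\ $s\log p\in 2\pi i\mathbb{Z}$, so $s=2\pi i m/\log p$ for some $m\in\mathbb{Z}$. All such $s$ satisfy $\operatorname{Re} s=0$, and $\{s:\operatorname{Re} s=0\}\cap B=\varnothing$ by the very definition of $B$.

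Combining these two observations, $L(s,\chi_1)$ is a product of finitely many nonzero complex numbers at every $s\in B$, hence $L(s,\chi_1)\neq 0$ for all $s\in B$, which is what we wanted to prove.

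There is essentially no obstacle here once Theorem 3 is taken as given; the only step that requires care is writing down (and, if desired, reproving) the factorization of $L(s,\chi_1)$ against $\zeta(s)$, since this is the bridge that transports the result from the Riemann zeta function to the Dirichlet $L$-function of the principal character. Everything else reduces to the trivial localization of the zeros of $1-p^{-s}$ on the imaginary axis, which lies outside $B$.
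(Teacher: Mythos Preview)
Your proposal is correct and follows essentially the same route as the paper: the paper's proof simply cites Apostol~\cite{2}, Theorem~11.7 (which is exactly the factorization $L(s,\chi_1)=\zeta(s)\prod_{p\mid k}(1-p^{-s})$) and then invokes Theorem~3. You are more explicit than the paper in checking that the Euler factors $1-p^{-s}$ do not vanish on $B$, but this is the only additional detail; the argument is otherwise identical.
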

\begin{proof}
See [2] Theorem 11.7 and then use theorem 3.
\end{proof}

\section{Open Questions}
\begin{enumerate}
    \item Are simple the zeros of the $\zeta$ Riemann Function?
    \item It is known that of all imaginary quadratic field $Q(\sqrt{-d})$ with
    class number $h$, we have
    $d<Ch^2\log{h}^2$, except for at most one exceptional field, for which $d$ may
    be Larger.

    Does there exist such an Exceptional Field?
\end{enumerate}

Hint.- See \cite{22.1} and \cite{22.2}.

\noindent Prof: Pedro J, Geraldo C.\\
Home address:\\
Calle principal 130. Delicias Nuevas\\
Cabimas (4013). Edo Zulia Venezuela.\\
E-mail: pegeraldo@luz.edu.ve \& pegeraldo@yahoo.com\\
Home phone: 0264-2513221
\end{document}